\newtheorem{satz}{Theorem}
\newtheorem{theorem}[satz]{Theorem}
\newtheorem{lemma}[satz]{Lemma}
\newtheorem{remark}[satz]{Remark}
\def\no{\noindent}
\def\sbeq{\subseteq}
\def\R{\mathbb {R}}
\def\a{\alpha}
\def\a{\alpha}
\def\o{\omega}
\def\({\big (}
\def\){\big )}
\def\D{\Delta}
\begin{document}

\title{A remark on  $A+B$ and $A-A$ for compact sets in $\R^n$}
\author{ By\\  \\{\sc Tomasz Schoen\footnote{\textsc{The author is supported by NCN grant 2012/07/B/ST1/03556.}} ~~and~~ Ilya D. Shkredov
%\footnote{}
}}
\date{}
\maketitle

\begin{abstract} We prove in particular that if $A\subset \R^n$ be a compact convex set, and $B\subset \R^n$ be an arbitrary compact set then $\mu (A-A) \ll \frac{\mu(A+B)^2}{\sqrt{n} \mu (A)}$, provided that $\mu(B)\ge \mu(A)$.
\end{abstract}

A well--known Ruzsa triangle inequality states that for any finite sets of an abelian group we have
$$|A-B|\le \frac{ |A+C||C+B|}{|C|} \,,$$
in particular if $B=A$ and $C=B$, then
$$ |A-A|\le \frac{ |A+B|^2}{|B|} \,.$$
The aim of this note is to prove a sharp up to a dimension--independent constant form of the above inequality for  a compact convex set $A\subset \R^n$, and  an arbitrary compact set $B\subset \R^n$, provided that $\mu(A)\ge \mu(B)$.
\bigskip

For a set  $A \subset \R^n$ and $x\in A-A$ put
$$
    A_x =  A\cap (A-x) \,.
$$

%Recall Lemma 10 from \cite{s_energy}.
\noindent Our main tool is the following lemma proved in \cite{s_balog} (Lemma 5). We recall its proof as it is very simple.

\begin{lemma}
    Let $A,B \subset \R^n$ be compact sets.
    Then
\begin{equation}\label{f:A^2+D(A)}
    \int_{A-A} \mu ( A_x+B)\, dx \le \mu (A+B)^2 \,.
\end{equation}\label{l:A^2+D(A)}
\end{lemma}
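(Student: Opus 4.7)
The plan is to establish a pointwise set inclusion $A_x+B \subseteq (A+B)_x$ for every $x\in A-A$, and then reduce the lemma to the basic Fubini identity $\int \mu(E_x)\,dx = \mu(E)^2$ applied to $E=A+B$.

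First I would check the inclusion directly from definitions. If $y\in A_x+B$, then $y=a+b$ for some $a\in A\cap(A-x)$ and $b\in B$. Then $y=a+b\in A+B$, and also $y+x=(a+x)+b\in A+B$ because $a+x\in A$ and $b\in B$. Hence $y\in (A+B)\cap ((A+B)-x) = (A+B)_x$, and in particular $\mu(A_x+B)\ls \mu((A+B)_x)$.

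Next, I would integrate this inequality in $x$ over $A-A$ and enlarge the domain of integration to $\R^n$ using nonnegativity, obtaining
\[
\int_{A-A} \mu(A_x+B)\,dx \;\ls\; \int_{\R^n} \mu((A+B)_x)\,dx.
\]
The right-hand side is then evaluated by the standard computation: writing $\mu(E_x)=\int \mathbf{1}_E(z)\mathbf{1}_E(z+x)\,dz$ and swapping the order of integration with Fubini gives $\int \mu(E_x)\,dx = \mu(E)^2$. Applying this with $E=A+B$ yields the claimed bound $\mu(A+B)^2$.

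There is no real obstacle here; the whole content sits in the inclusion $A_x+B\sbeq (A+B)_x$, which is what allows one to trade the asymmetric quantity $\mu(A_x+B)$ for the symmetric quantity $\mu((A+B)_x)$ before integrating. The rest is a routine Fubini computation.
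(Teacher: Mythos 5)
Your proof is correct and takes essentially the same route as the paper: the key inclusion $A_x+B\subseteq (A+B)_x$ (what the paper calls the Koester--Katz transform), followed by integrating over $x$ and evaluating $\int \mu((A+B)_x)\,dx=\mu(A+B)^2$ by Fubini. No gaps to report.
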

\begin{proof}
We apply a well-known Koester-Katz transform: if $x\in A-A$ then
$$A_x+B\sbeq (A+B)_x \,.$$
Therefore, we  have
$$\int _{A-A}\mu(A_x+B)dx\le \int _{A+B-A-B}\mu((A+B)_x)dx=\mu(A+B)^2 \,,$$
and the assertion follows.
$\hfill\Box$
\end{proof}

\bigskip

\noindent We also need a lower bound for  the size of $A_x$ for a convex set $A$, see \cite{TV} section 3.
We also give the proof for the sake of completeness.

\begin{lemma}
    Let $A\subset \R^n$ be a compact convex set and $r\in [0,1]$ be any real number.
    Then for all $x\in r(A-A)$
    the following holds
\begin{equation}\label{f:convex_A_x}
    \mu(A_x) \ge (1-r)^n \mu (A) \,.
\end{equation}
\label{l:convex_A_x}
\end{lemma}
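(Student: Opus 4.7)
My plan is to exhibit an explicit subset of $A_x$ of the desired measure by a convexity argument.

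Since $x \in r(A-A)$, I can write $x = r(a_1-a_2)$ for some $a_1,a_2 \in A$. The idea is that the homothetic copy $C := (1-r)A + r a_2$ is a good candidate for sitting inside $A_x$. First, by convexity of $A$, every point $(1-r)a + r a_2$ with $a \in A$ lies in $A$, so $C \sbeq A$. Second, translating by $x$ gives
$$C + x = (1-r)A + r a_2 + r(a_1-a_2) = (1-r)A + r a_1 \sbeq A\,,$$
again by convexity of $A$. The second inclusion is equivalent to $C \sbeq A - x$, hence $C \sbeq A \cap (A-x) = A_x$.

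It then remains only to compute $\mu(C) = (1-r)^n \mu(A)$, since $C$ is obtained from $A$ by scaling by $1-r$ and translating; this gives \eqref{f:convex_A_x}.

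The only ``obstacle'' is spotting the correct homothet, namely $(1-r)A + r a_2$: its center $a_2$ is chosen so that $C$ lies in $A$, while the shift $x = r(a_1-a_2)$ is precisely what is needed to turn the center into $a_1$, keeping $C+x$ inside $A$ as well. Once this construction is in hand, the proof is a one-line verification requiring no further estimates.
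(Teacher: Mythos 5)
Your proof is correct and is essentially the paper's own argument: both exhibit the homothet $(1-r)A$ translated into $A\cap(A\pm x)$ via convexity and conclude by scaling of Lebesgue measure. (If anything, your choice of center $a_2$ lands the copy directly in $A\cap(A-x)=A_x$ as defined, whereas the paper places it in $A\cap(A+x)$, which has the same measure.)
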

\begin{proof}
    Write  $x=ra_1-ra_2$, where $a_1,a_2\in A$ and let
     $a\in A$ be any element.
    By convexity $(1-r) a + ra_1 \in A$ and $(1-r) a + ra_1=(1-r) a + ra_2+x \in A+x$.
    Thus $(1-r) A + ra_1 \subseteq A \cap (A+x)$
    and the result follows.
$\hfill\Box$
\end{proof}

\bigskip
\noindent Finally, we recall the Brunn-Minkowski inequality, see \cite{TV} section 3.

\begin{theorem}
    Let $A,B\subset \R^n$ be non--empty compact sets.
    Then
\begin{equation*}\label{f:B-M}
    \mu (A+B)^{1/n} \ge \mu (A)^{1/n} + \mu (B)^{1/n} \,.
\end{equation*}
\label{t:B-M}
\end{theorem}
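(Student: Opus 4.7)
The plan is the standard induction--on--boxes argument. First, by rescaling $A\to A/\mu(A)^{1/n}$, $B\to B/\mu(B)^{1/n}$ and setting $\lambda=\mu(A)^{1/n}/(\mu(A)^{1/n}+\mu(B)^{1/n})\in(0,1)$, the additive Brunn--Minkowski inequality is equivalent to the multiplicative form
$$
\mu(\lambda A+(1-\lambda)B)\gs\mu(A)^\lambda\mu(B)^{1-\lambda},\qquad \lambda\in(0,1),
$$
so it suffices to prove this.

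For axis--aligned boxes with side lengths $a_i$ and $b_i$ the Minkowski sum $\lambda A+(1-\lambda)B$ is itself a box with side lengths $\lambda a_i+(1-\lambda)b_i$, and the multiplicative form reduces to the coordinatewise weighted AM--GM inequality $\prod_i(\lambda a_i+(1-\lambda)b_i)\gs\prod_i a_i^\lambda b_i^{1-\lambda}$.

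To bootstrap to general compact sets one argues by induction on the total number of boxes in disjoint box decompositions of $A$ and $B$. If $A$ contains at least two boxes, pick an axis--aligned hyperplane $H$ separating two of them and cut $A=A_+\sqcup A_-$ along $H$; translate $B$ so that a parallel hyperplane cuts $B=B_+\sqcup B_-$ with matching volume ratio $\mu(A_+)/\mu(A)=\mu(B_+)/\mu(B)=:\theta$. The sums $\lambda A_\pm+(1-\lambda)B_\pm$ are disjoint subsets of $\lambda A+(1-\lambda)B$ (separated by a translated hyperplane) and each uses strictly fewer boxes, so the inductive hypothesis gives
$$
\mu(\lambda A+(1-\lambda)B)\gs\theta\,\mu(A)^\lambda\mu(B)^{1-\lambda}+(1-\theta)\,\mu(A)^\lambda\mu(B)^{1-\lambda}=\mu(A)^\lambda\mu(B)^{1-\lambda}.
$$
The extension to arbitrary compact $A,B$ then follows from inner regularity: approximate from inside by finite disjoint unions of dyadic boxes and pass to the limit using continuity of the Minkowski sum under Hausdorff convergence.

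The main technical obstacle lies in the slicing step, namely choosing the hyperplane so that both pieces $A_\pm$ have strictly fewer boxes and arranging the translation of $B$ so that the volume ratios match on both sides of the corresponding hyperplane in the target. Once these geometric details are in place the induction closes immediately, because the weights $\theta$ and $1-\theta$ sum to $1$; steps 1 and 2 are routine algebra.
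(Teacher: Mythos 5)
The paper itself does not prove Theorem \ref{t:B-M} --- it is quoted from \cite{TV} --- so you are supplying the classical Hadwiger--Ohmann argument from scratch. Its core is sound: the reduction of the additive form to the multiplicative form by rescaling (after disposing of the trivial case $\mu(A)=0$ or $\mu(B)=0$, which your rescaling silently assumes away), the box case via weighted AM--GM, and the halving induction on the total number of boxes, where the matching volume ratio $\theta$ is obtained by the intermediate value theorem and the exponents $\lambda+(1-\lambda)=1$ make the factor $\theta$ come out linearly. All of that is the standard, correct proof.

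The genuine gap is in your final approximation step. Inner approximation of an arbitrary compact set by finite unions of boxes is impossible in general: a fat Cantor set $C\subset[0,1]$ (or $C\times[0,1]^{n-1}$ in $\R^n$) is compact with positive measure but contains no nondegenerate box, so every finite union of boxes inside it has measure zero and no passage to the limit can recover $\mu(A)$. The appeal to ``continuity of the Minkowski sum under Hausdorff convergence'' does not repair this, since volume is only upper semicontinuous under Hausdorff convergence (grids of finitely many points converge to a cube while their volumes stay $0$), which is the wrong direction for inner approximants. The standard fix is to approximate from the \emph{outside}: for $\delta>0$ choose finite unions of boxes $A'\supseteq A$, $B'\supseteq B$ contained in the closed $\delta$-neighborhoods $A_\delta$, $B_\delta$ (possible by compactness), note that $A'+B'\subseteq (A+B)_{2\delta}$, apply the box-union case to $A',B'$, and let $\delta\to 0$ using that $\mu\big((A+B)_{2\delta}\big)$ decreases to $\mu(A+B)$ because $A+B$ is compact; equivalently, prove the inequality for open sets (where inner dyadic approximation is legitimate) and transfer to compact sets via their open neighborhoods. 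With that replacement the proof closes; as written, the last step fails.
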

\noindent Now we can formulate our main result.

\begin{theorem}
    Let $A\subset \R^n$ be a compact convex set, and $B\subset \R^n$ be an arbitrary compact set.
    Then
\begin{equation}\label{f:A+A,A-A-}
    (1+\o + \dots + \o^{[\sqrt{n}]}) \mu (B)^{1-1/n} \mu (A)^{1/n} \mu (A-A) \ll \mu (A+B)^2 \,,
\end{equation}
    where $\o = (\mu (A) / \mu (B))^{1/n}$.
    In particular, if $\mu (A) \ge \mu (B)$ then
\begin{equation}\label{f:A+A,A-A}
    \mu (A-A) \ll \frac{\mu (A+B)^2}{\sqrt{n} \mu (A)^{1/n} \mu (B)^{1-1/n}}
    \,,
\end{equation}
    and if $\mu (B) \ge \mu (A)$ then
\begin{equation}\label{f:A+A,A-A+}
    \mu (A-A) \ll \frac{\mu(A+B)^2}{\sqrt{n} \mu (A)}
    \,.
\end{equation}
\label{t:A+A,A-A}
\end{theorem}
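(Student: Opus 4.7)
The plan is to chain Lemma~\ref{l:A^2+D(A)}, Lemma~\ref{l:convex_A_x} and Brunn--Minkowski into a single integral inequality, and then to extract the geometric sum $1+\o+\cdots+\o^{[\sqrt{n}]}$ from a binomial expansion. Writing $\a:=\mu(A)^{1/n}$ and $\b:=\mu(B)^{1/n}$, so that $\o=\a/\b$, I would set $r(x):=\inf\{r\ge 0:x\in r(A-A)\}$, the Minkowski functional of the origin-symmetric convex body $A-A$. Lemma~\ref{l:convex_A_x} then gives $\mu(A_x)^{1/n}\ge(1-r(x))\a$, and applying Brunn--Minkowski to $A_x$ and $B$ yields $\mu(A_x+B)\ge\((1-r(x))\a+\b\)^n$ for every $x\in A-A$.

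To integrate this bound, I would use the scaling identity $\mu(r(A-A))=r^n\mu(A-A)$: it implies that $r(x)$ has distribution $nr^{n-1}\,dr$ on $[0,1]$ under the uniform measure on $A-A$. Combining with Lemma~\ref{l:A^2+D(A)} gives
\[
\mu(A+B)^2\ge \mu(A-A)\int_0^1\((1-r)\a+\b\)^n nr^{n-1}\,dr \,.
\]
Expanding the integrand by the binomial theorem and evaluating the Beta integrals $\int_0^1(1-r)^jnr^{n-1}\,dr=1/\binom{n+j}{j}$, the right-hand side becomes
\[
\mu(A-A)\sum_{j=0}^n\frac{\binom{n}{j}}{\binom{n+j}{j}}\,\a^j\b^{n-j} \,.
\]

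The crux of the argument is the elementary estimate
\[
\frac{\binom{n}{j}}{\binom{n+j}{j}}=\prod_{i=0}^{j-1}\frac{n-i}{n+i+1}\gg 1 \qquad (1\le j\le [\sqrt{n}]+1),
\]
which holds because each factor in the product exceeds $1-O(1/\sqrt{n})$ in this range, so the product of at most $\sqrt{n}+1$ such factors is bounded below by an absolute positive constant. Keeping only the terms $j=1,\ldots,[\sqrt{n}]+1$ above yields
\[
\mu(A+B)^2\gg \mu(A-A)\sum_{j=1}^{[\sqrt{n}]+1}\a^j\b^{n-j}=\mu(A-A)\,\a\b^{n-1}\(1+\o+\cdots+\o^{[\sqrt{n}]}\) \,,
\]
which is (\ref{f:A+A,A-A-}) since $\a\b^{n-1}=\mu(A)^{1/n}\mu(B)^{1-1/n}$.

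The two corollaries follow from trivial estimates on the geometric sum. When $\o\ge 1$, each of the $[\sqrt{n}]+1$ terms is $\ge 1$, so the sum is $\gg\sqrt{n}$ and (\ref{f:A+A,A-A}) drops out. When $\o\le 1$, the sum is at least $([\sqrt{n}]+1)\o^{[\sqrt{n}]}\ge ([\sqrt{n}]+1)\o^{n-1}$, and multiplying by $\a\b^{n-1}$ produces the factor $\sqrt{n}\,\a\b^{n-1}\o^{n-1}=\sqrt{n}\,\a^n=\sqrt{n}\,\mu(A)$ required for (\ref{f:A+A,A-A+}). The one step that needs any care is the binomial-coefficient estimate above; it is precisely the source of the dimension-independent factor $\sqrt{n}$ in the final bound.
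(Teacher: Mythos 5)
Your proof is correct. For the main inequality (\ref{f:A+A,A-A-}) you follow essentially the paper's route: the same three ingredients (Lemma \ref{l:A^2+D(A)}, Lemma \ref{l:convex_A_x}, Brunn--Minkowski), and your layer-cake computation via the distribution $nr^{n-1}\,dr$ of the Minkowski functional of $A-A$ is just a repackaging of the paper's integration by parts; after the binomial expansion you arrive at exactly the paper's coefficients $\binom{n}{j}/\binom{n+j}{j}=(n!)^2/\big((n-j)!(n+j)!\big)$, and your product estimate for $1\le j\le[\sqrt{n}]+1$ is the same crude bound the paper gets from $\ln(1-x)\ge-2x$ (both arguments silently assume $n$ exceeds an absolute constant, with small $n$ absorbed into the implied constant). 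The one genuine difference is your derivation of (\ref{f:A+A,A-A+}): the paper passes to a compact subset $B'\subseteq B$ with $\mu(B')=\mu(A)$ and applies (\ref{f:A+A,A-A}) to $A$ and $B'$, whereas you deduce (\ref{f:A+A,A-A+}) directly from (\ref{f:A+A,A-A-}) via $1+\o+\cdots+\o^{[\sqrt{n}]}\ge([\sqrt{n}]+1)\o^{[\sqrt{n}]}\ge([\sqrt{n}]+1)\o^{n-1}$ together with the identity $\o^{n-1}\mu(B)^{1-1/n}\mu(A)^{1/n}=\mu(A)$; although the intermediate steps look lossy, the chain lands exactly on the required bound, and it avoids the extra step of producing a compact subset of prescribed measure. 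The only caveat is that $[\sqrt{n}]\le n-1$ needs $n\ge2$; for $n=1$ the claim is trivial, so nothing is lost.
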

\begin{proof}
%    Now let us prove the first part of the theorem.
    Let $\a = \mu (B) / \mu (A)$.
    Applying (\ref{f:A^2+D(A)}) and the Brunn-Minkowski inequality,
    we get
\begin{eqnarray*}
    \mu^{2} (A+B) &\ge& \int_{A-A} \mu (B + A_x)\, dx
        \ge
            \int_{A-A} \left( \mu (B)^{1/n} + \mu (A_x)^{1/n} \right)^n \, dx\\
    &=&
        \a \sum_{k=0}^n \binom{n}{k} \int_{A-A} \a^{-k/n} \mu (A)^{(n-k)/n} \mu (A_x)^{k/n}\, dx \,.
\end{eqnarray*}
    To estimate the size of $A_x$ we  use Lemma \ref{l:convex_A_x}.
    After integration by parts, we obtain
\begin{eqnarray*}
    \mu^{2} (A+B) &\ge& \mu (B) \sum_{k=0}^n \binom{n}{k} k \a^{-k/n} \int_0^1 (1-r)^{k-1} \mu(r (A-A)) \, dr\\
        &=&\mu (B) \mu (A-A) \sum_{k=1}^n \binom{n}{k} k \a^{-k/n} \int_0^1 (1-r)^{k-1} r^n \, dr\\
               &=& \mu (B) \mu (A-A) \sum_{k=1}^n \binom{n}{k} k\a^{-k/n} {\cal B}(k,n+1)  \,,
\end{eqnarray*}
where ${\cal B}(\cdot,\cdot)$ is the beta function.
Thus
$$
    \mu^{2} (A+B) \ge \mu (B) \mu (A-A) \sum_{k=1}^n \a^{-k/n} \frac{(n!)^2}{(n-k)! (n+k)!}
        :=
            \mu (B) \mu (A-A) \times \sigma \,.
$$
One can calculate the last sum $\sigma$ using the gamma function or hypergeometric series,  but we use a rather crude estimate.
Put
%$\D \sim \sqrt{n}$.
$\D = [\sqrt{n}]+1$,
then
\begin{eqnarray*}
    \sigma &=& \sum_{k=1}^n \a^{-k/n} \prod_{j=1}^{k-1} \left(1-\frac{j}{n} \right) \prod_{j=1}^k \left(1+\frac{j}{n} \right)^{-1}
        =
            \sum_{k=1}^n \a^{-k/n} \left(1+\frac{k}{n} \right)^{-1} \prod_{j=1}^{k-1} \left(1-\frac{2j}{n+j} \right)
                \,.
\end{eqnarray*}
Using inequalities $\ln(1-x) \ge -2x$ for $0\le x \le 0.5$ and $k\le n$, we obtain 
\begin{eqnarray*}
\sigma
    &\ge&
        \frac12 \sum_{k=1}^{\D} \a^{-k/n} \exp \left( -\sum_{j=1}^{k-1} \frac{4j}{n+j} \right)
            \ge
                \frac12 \sum_{k=1}^{\D} \a^{-k/n} \exp \left( -\frac{2k^2}{n} \right)
                    \gg
                        \sum_{k=1}^{\D} \o^k \,.
                        \end{eqnarray*}
This gives us (\ref{f:A+A,A-A-}).
                        To see (\ref{f:A+A,A-A}) it is enough to observe that if $\mu(A)\ge \mu (B)$ then
$\sum_{k=1}^{\D} \o^k\ge \sqrt{n}$.
To get (\ref{f:A+A,A-A+}) take any subset $B'$ of $B$ such that $\mu (B')=\mu(A)$ and apply
 (\ref{f:A+A,A-A}), then
 $$ \mu (A-A) \ll \frac{\mu(A+B')^2}{\sqrt{n} \mu (A)} \le \frac{\mu(A+B)^2}{\sqrt{n} \mu (A)} \,.$$
This completes the proof.
$\hfill\Box$
\end{proof}

\begin{remark}
    Estimate (\ref{f:A+A,A-A}) is tight, see paper \cite{RS} or book \cite{Ruzsa_book}, discussion after Corollary 8.3.
    Indeed,
    consider
    $n$--dimensional simplex
$$
    A  = A_L = \{ (x_1,\dots,x_n) \in \R^n ~:~ x_j \ge 0,\, \sum_{j=1}^n x_j \le L \} \,,
$$
    where $L$ is a parameter.
    Then $\mu(A+A) = 2^n \mu (A)$ and 
    %we will show that 
%    \begin{equation}\label{f:A-A_vol}
$ 
        \mu (A-A) = \binom{2n}{n} \mu (A) 
        %\,.
$
    (to obtain the last formula one can count to number of  integer points in $A$, say, and approximate $\mu (A-A)$ by 
    $$
        \sum_{a+b+c = n} \binom{n}{a,b,c} \binom{L}{a} \binom{L}{b} \sim \frac{L^n}{n!} \sum_{m=0}^n \binom{n}{m}^2 = \frac{L^n}{n!} \binom{2n}{n} = \mu (A) \binom{2n}{n} \,,
    $$ 
    see \cite{Granville}. Here $a,b,c$ the number of possibilities for the positive, negative and zero coordinates in $A-A$, correspondingly).
%    \end{equation}
    Hence
$$
    \mu (A-A) \gg \frac{\mu (A+A)^2}{\sqrt{n} \mu (A)} \,.
$$
%    It remains to prove (\ref{f:A-A_vol}). We consider two simplicies $A$ and $-A$.
%    Clearly, each of the simplicies is the convex hull of $n$ segments of length $L$ with the beginning at the origin. 
%    Choosing $k\in [0,1,\dots,n]$ and $n-k$ such segments from $A$ and $-A$, correspondingly, we 
%    It remains to use the formula $\sum_{k=0}^n \binom{n}{k} \binom{n}{n-k} = \binom{2n}{n}$.    
\end{remark}

{}
\bigskip

\noindent{T.~Schoen\\
\no{Faculty of Mathematics and Computer Science,\\ Adam Mickiewicz
University,\\ Umul\-towska 87, 61-614 Pozna\'n, Poland\\} {\tt
schoen@amu.edu.pl}
\bigskip

\noindent{I.D.~Shkredov\\
%%\noindent{РћС‚РґРµР» Р°Р»РіРµР±СЂС‹ and С‚РµРѕСЂРёРё С‡РёСЃРµР»,\\
Steklov Mathematical Institute,\\
ul. Gubkina, 8, Moscow, Russia, 119991}
%MSU, IPPI RAN\\}
%%\\
%and
%\\
%Delone Laboratory of Discrete and Computational Geometry,\\
%Yaroslavl State University,\\
%Sovetskaya str. 14, Yaroslavl, Russia, 150000
\\
and
\\
IITP RAS,  \\
Bolshoy Karetny per. 19, Moscow, Russia, 127994\\
{\tt ilya.shkredov@gmail.com}

\end{document}